\newtheorem{Theorem}{\quad Theorem}[section]
\newtheorem{Lemma}[Theorem]{\quad Lemma}
\newenvironment{proof}{\noindent\relax{\sc
     Proof}}{{\samepage\par\nopagebreak\hbox
     to\hsize{\hfill$\Box$}}}
\begin{document}
\title{Extinction times for a birth-death process with weak competition}% insert title - use \\ if it requires more than one line.

\author{{\sc Serik Sagitov}\footnote{corresponding author} \ and {\sc Altynay Shaimerdenova} \and {\it Chalmers University of Technology and University of Gothenburg,}\\  {\it and 
Al-Farabi Kazakh National University}} % Affiliation is just the name of your university or institution
\date{}
\maketitle
Published in Lithuanian Mathematical Journal, Vol. 53, No. 2, April, 2013, pp. 220--234

\begin{abstract}
We consider a birth-death process with the birth rates $i\lambda$
and death rates $i\mu +i(i-1)\theta$, where $i$ is the current
state of the process. A positive competition rate
$\theta$ is assumed to be small. In the supercritical case when $\lambda>\mu$
this process can be viewed as a demographic model for a population
with a high carrying capacity around $\lambda-\mu\over\theta$. 

The article reports in a self-contained manner on the asymptotic properties of the
time to extinction for this logistic branching process as $\theta\to0$. All three reproduction regimes $\lambda>\mu$, $\lambda<\mu$, and $\lambda=\mu$ are studied.

\end{abstract}

{\bf Mathematics Subject Classification:} 60J80 \\

{\bf Keywords:} Birth-death process, carrying capacity, time to
extinction, coupling method, logistic
branching process

\section{Introduction}

One of the basic population models with continuous time is the linear birth-death process $(X_{0}(t),t\geq 0)$  with fixed birth and death rates $\lambda$ and $\mu$ per individual. This is a simple example of a branching process describing a population of independently reproducing individuals having three different reproductive regimes: supercritical ($\lambda>\mu$), critical  ($\lambda=\mu$), and subcritical  ($\lambda<\mu$).

The properties of the linear birth-death process $X_0(\cdot)$ and its time to extinction $\tau_0$ are well-known, see for example \cite[pp. 270-2]{GS}. In particular,
\[\mathbb{E}_mX_0(t)=me^{(\lambda-\mu)t}\]
and
$$\mathbb{P}_m(\tau_0\leq t)=\left\{\begin{array}{cl}
 \left(\frac{\mu(1-e^{(\mu-\lambda)t})}{\lambda-\mu e^{(\mu-\lambda)t}}\right)^{m}, &  \mbox{in the sub- and supercritical cases,} \\
(\frac{\lambda t}{1+\lambda t})^{m},  &     \mbox{in the critical case,}
\end{array}
\right.
$$
where $\mathbb{P}_m(\cdot)$ and $\mathbb{E}_m(\cdot)$ stand for the conditional probability and expectation given that the corresponding birth-death process starts from the state $m$. It follows that in the supercritical and critical cases $\mathbb{E}_m(\tau_0)=\infty$ and in the subcritical case  $\mathbb{E}_m(\tau_0)<\infty$.
Letting $t\to\infty$ one obtains the extinction probabilities
\[\mathbb{P}_m(\tau_0<\infty)=\left\{\begin{array}{cl}
 1, &  \mbox{in the subcritical and critical cases,} \\
(\frac{\mu}{\lambda})^{m},  &     \mbox{in the supercritical case.}
\end{array}
\right.
\]
Moreover, it is easy to see that in the subcritical case
\begin{equation}
\mathbb{P}_m\left(\tau_0\le {\ln m+\ln(1-\lambda/\mu)+x\over\mu-\lambda}\right)\to e^{-(e^{-x})},\ \ m\to\infty,
\label{ex1}
\end{equation}
and in the critical case
\begin{equation}
\mathbb{P}_m\left(\tau_0\le mx\right)\to \exp\{-(\lambda x)^{-1}\},\ \ m\to\infty.
\label{ex2}
\end{equation}

The absence of competition among individuals is a major weakness of the linear birth-death population model. 
A natural modification of this simple-minded model is to introduce extra deaths due to competition.
We consider an indexed birth-death process $(X_{\theta}(t),t\geq 0)$ taking non-negative integer values  $i\in\{0,1,2,...\}$ and having time homogenous jump rates
\begin{align}
\left\{\begin{array}{ll}
\mathbb{P}_i(X_\theta(t)=i+1)=\lambda_i t+o(t),& \mbox{with } \lambda_i=i\lambda,\\
\mathbb{P}_i(X_\theta(t)=i-1)=\mu_it+o(t),& \mbox{with } \mu_i=i\mu +i(i-1)\theta,\\
\mathbb{P}_i(X_\theta(t)=i)=1-(\lambda_i +\mu_i)t+o(t),& 
\end{array}
\right.
\label{BD}
\end{align}
as $t\to0$. The key parameters $(\lambda, \mu, \theta)$ of the model are the birth, death, and competition rates providing the following description of the demographic dynamics until the process hits the absorption state $i=0$.

Given the current population size $i\ge1$, the next change in the population size is caused either by a birth or by a death of a particle. It is assumed that coexisting particles give birth independently of each other at rate $\lambda$ per particle, so that interaction among particles does not influence birth events. Particle death is modeled by two parameters: parameter  $\mu$ gives the death rate per particle "due to natural causes" and parameter $\theta$, usually assumed to be small,  quantifies the death rate due to competition pressure (factor $i(i-1)$ appearing in front of $\theta$ represents the number of pairs of competing particles). Putting $\theta=0$ brings us back to the linear  birth-death process $X_0(\cdot)$ mentioned in the Introduction. 

The process $X_\theta(\cdot)$  is an example of the so called logistic branching process studied in \cite{AL} along with its continuous state counterpart. The birth-death framework allows for a more detailed analysis in this special case.
The most conspicuous new feature of $X_\theta(\cdot)$ compared to the linear  birth-death process $X_0(\cdot)$ is the existence of  a threshold
value 
\begin{equation}
i_\theta=\lfloor{\lambda-\mu\over\theta}\rfloor+1
\label{ite}
\end{equation}
in the supercritical case. Obtained from the equation $\lambda_i \approx\mu_i$ the threshold value $i_\theta$ splits the state space in two parts. For
$i<i_\theta$ the process $X_\theta(\cdot)$ tends
to grow while for $i>i_\theta$ it tends to
decrease. A relevant biological interpretation of this threshold
value is the {\it carrying capacity} of the environment for the population in question.

In Section \ref{Sbd} we summarise some useful properties of the time-homogeneous birth-death processes. It follows, in particular, 
that the quadratic form of the death rate compared to the linear birth
rate ensures that our birth-death process with competition goes extinct with probability
one (in contrast with a supercritical linear  birth-death process which never dies out with a positive probability). One of the most interesting characteristics of the process
$X_\theta(\cdot)$ is the random time to extinction $\tau_\theta$.

 If $\theta$ is small, the competition
component $i(i-1)\theta$ is much smaller than $i\mu$ for $i\ll
\theta^{-1}$, so that the process $X_\theta(\cdot)$ at relatively
low levels can be approximated by the linear birth-death process
$X_0(\cdot)$ with parameters $(\lambda, \mu)$ and the same initial
state $X_0(0)=m$. This is done using a coupling construction presented in Section \ref{Scou}.

Section \ref{Smr} presents the main asymptotic results for expected value and distribution of the time to extinction $\tau_\theta$ as $\theta\to0$. The remaining sections contain the proofs.

\section{General properties of time homogeneous birth-death processes}\label{Sbd}

Next we give a short summary of useful results for a time homogeneous birth-death process with birth rates $\lambda_i$ and death rates $\mu_i$, some of these properties can be found in \cite{KM} and \cite{KT}. An important probability 
$$Q_{i}=\mathbb{P}_i(\mbox{reach $i+1$ before } 0)$$ 
satisfies a recursion
\[Q_{i}={\lambda_i\over\lambda_i+\mu_i}+{\mu_i\over\lambda_i+\mu_i}Q_{i-1}Q_i\]
implying
\[{1\over1-Q_{i}}=1+{\lambda_i\over\mu_i}{1\over1-Q_{i-1}}.\]
Using notation
\[\pi_0=1,\  \pi_j={\mu_1\cdots\mu_j\over \lambda_1\cdots\lambda_j},\ \Pi_{0}=0,\ \Pi_k=\sum_{j=0}^{k-1} \pi_j \]
we derive 
$Q_i=\frac{\Pi_{i}}{\Pi_{i+1}}$. 

More generally, for $i\in(k,n)\subset(0,\infty)$
\begin{align*}
\mathbb{P}_i(\mbox{reach $n$  before $k$})&=\frac{\Pi_{i}-\Pi_{k}}{\Pi_{n}-\Pi_{k}},\\
\mathbb{P}_i(\mbox{reach $k$ before $n$})&=\frac{\Pi_{n}-\Pi_{i}}{\Pi_{n}-\Pi_{k}}.
\end{align*}
Using this we can compute the conditional jumping probabilities
\begin{align*}
\mathbb{P}_i(&\mbox{first jump goes down}|\mbox{reach $i+1$ before $k$})\\
&={\mu_i\over\lambda_i+\mu_i}\frac{\mathbb{P}_{i-1}(\mbox{reach $i+1$ before $k$})}{\mathbb{P}_i(\mbox{reach $i+1$ before $k$})}={\mu_i\over\lambda_i+\mu_i}\frac{\Pi_{i-1}-\Pi_{k}}{\Pi_{i}-\Pi_{k}},\end{align*}
which in turn lead to the recursion
%\begin{align*}
%\beta_i&\equiv \mathbb{E}_i(\mbox{ time to reach $i+1$}|\mbox{reach $i+1$ before $0$})\\
%&={1\over\lambda_i+\mu_i}+{\mu_i\over\lambda_i+\mu_i}\frac{\Pi_{i-2}}{\Pi_{i-1}}(\beta_{i-1}+\beta_i)
%\end{align*}
\begin{align*}
\beta_i^k&\equiv \mathbb{E}_i(\mbox{time to reach $i+1$}|\mbox{reach $i+1$ before $k$})\\
&={1\over\lambda_i+\mu_i}+{\mu_i\over\lambda_i+\mu_i}\frac{\Pi_{i-1}-\Pi_{k}}{\Pi_{i}-\Pi_{k}}(\beta_{i-1}^k+\beta_i^k)
\end{align*}
resulting in a  difference equation
%\begin{align*}
%\beta_i&={\Pi_{i-1}\over\lambda_i\Pi_{i}}+{\mu_i\Pi_{i-2}\over\lambda_i\Pi_{i}}\beta_{i-1},\  \beta_1={1\over\lambda_1+\mu_1},
%%\\
%%&={\Pi_{i-1}\over\lambda_i\Pi_{i}}+{\mu_i\Pi_{i-2}\over\lambda_i\Pi_{i}}({\Pi_{i-2}\over\lambda_{i-1}\Pi_{i-1}}+{\mu_{i-1}\Pi_{i-3}\over\lambda_{i-1}\Pi_{i-1}}\beta_{i-2})\\
%%&={\Pi_{i-1}\over\lambda_i\Pi_{i}}+{\mu_i\Pi_{i-2}\over\lambda_i\Pi_{i}}{\Pi_{i-2}\over\lambda_{i-1}\Pi_{i-1}}+{\mu_{i-1}\Pi_{i-3}\over\lambda_{i-1}\Pi_{i-1}}{\mu_i\Pi_{i-2}\over\lambda_i\Pi_{i}}{\Pi_{i-3}\over\lambda_{i-2}}+{\mu_{i-1}\Pi_{i-3}\over\lambda_{i-1}\Pi_{i-1}}{\mu_i\Pi_{i-2}\over\lambda_i\Pi_{i}}{\mu_{i-2}\Pi_{i-4}\over\lambda_{i-2}\Pi_{i-2}}\beta_{i-3}\\
%\end{align*}
\begin{align*}
\beta_i^k&={\Pi_{i}-\Pi_{k}\over\lambda_i(\Pi_{i+1}-\Pi_{k})}+{\mu_i(\Pi_{i-1}-\Pi_{k})\over\lambda_i(\Pi_{i+1}-\Pi_{k})}\beta_{i-1}^k,\  \beta_{k+1}^k={1\over\lambda_{k+1}+\mu_{k+1}},
\end{align*}
which is easily solved as
\begin{equation}
\beta_i^k=\frac{\pi_i}{(\Pi_{i}-\Pi_{k})(\Pi_{i+1}-\Pi_{k})}\sum_{j=k+1}^i\frac{(\Pi_{j}-\Pi_{k})^2}{\lambda_j\pi_{j}}.
\label{bi}
\end{equation}

Similarly, for $k\in[1,i]$ the conditional jumping probabilities
\begin{align*}
\mathbb{P}_k(&\mbox{first jump goes up}|\mbox{reach $k-1$ before  $i+1$})\\
&={\lambda_k\over\lambda_k+\mu_k}\frac{\mathbb{P}_{k+1}(\mbox{reach $k-1$ before $i+1$})}{\mathbb{P}_k(\mbox{reach $k-1$ before $i+1$})}={\lambda_k\over\lambda_k+\mu_k}\frac{\Pi_{i+1}-\Pi_{k+1}}{\Pi_{i+1}-\Pi_{k}},\end{align*}
give the recursion
\begin{align*}
\beta_k^{i+1}&\equiv \mathbb{E}_k(\mbox{ time to reach $k-1$}|\mbox{reach $k-1$ before $i+1$})\\
&={1\over\lambda_k+\mu_k}+{\lambda_k\over\lambda_k+\mu_k}\frac{\Pi_{i+1}-\Pi_{k+1}}{\Pi_{i+1}-\Pi_{k}}(\beta_{k+1}^{i+1}+\beta_k^{i+1})
\end{align*}
resulting in a  difference equation
\begin{align*}
\beta_k^{i+1}&=\frac{\Pi_{i+1}-\Pi_{k}}{\mu_k(\Pi_{i+1}-\Pi_{k-1})}+{\lambda_k\over\mu_k}\frac{\Pi_{i+1}-\Pi_{k+1}}{\Pi_{i+1}-\Pi_{k-1}}\beta_{k+1}^{i+1},\ \ \beta_i^{i+1}={1\over\lambda_i+\mu_i},
\end{align*}
which implies
\begin{equation}
\beta_k^{i+1}=\frac{\pi_{k-1}}{(\Pi_{i+1}-\Pi_{k-1})(\Pi_{i+1}-\Pi_{k})}\sum_{j=k}^{i}\frac{(\Pi_{i+1}-\Pi_{j})^2}{\lambda_{j}\pi_{j}}.
\label{bik}
\end{equation}
Observe that for all $v>u\ge0$ relations \eqref{bi} and \eqref{bik} bring
\begin{equation}
\sum_{i=u+1}^v\beta_i^{u}=\sum_{k=u+1}^v\beta_k^{v+1}=\sum_{j=u+1}^{v}\frac{(\Pi_{v+1}-\Pi_{j})(\Pi_{j}-\Pi_{u})}{\lambda_{j}\pi_{j}(\Pi_{v+1}-\Pi_{u})}.
\label{sum}
\end{equation}
This is a confirmation (in terms of the first moments) of  the statement in \cite{Su} claiming that the corresponding conditional hitting times are equal in distribution.

The expected absorption time is given by the formula
\begin{align}
\mathbb{E}_i(\mbox{time to reach 0})&=\sum_{j=0}^{i-1}\pi_j\sum_{k=j+1}^{\infty}\frac{1}{\lambda_k\pi_k}=\sum_{k=1}^{\infty}\frac{\Pi_{k\wedge i}}{\lambda_k\pi_k}. \label{Et}
\end{align}
Indeed, if we denote the last expectation by $\alpha_i$, then the following recursion 
\begin{align*}
\alpha_i=\frac{1}{\lambda_i+\mu_i}+\frac{\lambda_i}{\lambda_i+\mu_i}\alpha_{i+1}+\frac{\mu_i}{\lambda_i+\mu_i}\alpha_{i-1},
\end{align*}
takes place with $\alpha_0=0$. From this recursion it is straightforward  to derive formula \eqref{Et}. It follows from \eqref{Et} that
\begin{equation}
\mathbb{E}_i(\mbox{time to reach $i-1$})=\pi_{i-1}\sum_{k=i}^{\infty}\frac{1}{\lambda_k\pi_k}.
\label{Ett}
\end{equation}

In particular, for the subcritical linear birth-death process formula \eqref{Et} gives
\begin{align*}
%\mathbb{E}_m(\tau_0)&=\sum_{k=1}^{m}{1-(\frac{\lambda}{\mu})^k\over(\mu-\lambda) k}+{(\frac{\mu}{\lambda})^m-1\over\mu-\lambda}\sum_{k=m+1}^{\infty}{(\frac{\lambda}{\mu})^k\over k},\\
\mathbb{E}_m(\tau_0)&={1\over\mu-\lambda}\left(\sum_{k=1}^{m}{1-s^k\over k}+(s^{-m}-1)\sum_{k=m+1}^{\infty}{s^k\over k}\right)\\
&={1\over\mu-\lambda}\left(\sum_{k=1}^{m}k^{-1}+\ln(1-s)+s^{-m}\sum_{k=m+1}^{\infty}{s^k\over k}\right),
\end{align*}
with $s=\lambda/\mu$, implying  
\begin{align}
\mathbb{E}_m(\tau_0)&={\ln m+\gamma+\ln(1-\lambda/\mu)\over\mu-\lambda}+o(1),\ m\to\infty, \label{Et0}
\end{align}
where $\gamma=0,577...$ is Euler's constant. This complements the weak convergence \eqref{ex1} in terms of asymptotic equality of the corresponding expectations.

\section{A coupling to the linear birth-death process}\label{Scou}

To partially extrapolate the nice  properties of the linear birth-death process $X_0(\cdot)$  to the process with interaction $X_\theta(\cdot)$  one can use the following coupling construction (cf \cite{AD}).

Consider a bivariate Markov process $(\widehat{X}_\theta(\cdot),\widehat{X}_0(\cdot))$
with transition rates given in the next list.
\begin{center}
\begin{tabular}{ll}
Type of transition $(0\le i< j)$\ \ \ \ \ \ \ \ \ \ \ \ &Transition rate \\ \\
$(i,i)\to(i+1,i+1)$&$\lambda i$\\
$(i,i)\to(i-1,i-1)$&$\mu i $\\
$(i,i)\to(i-1,i)$&$\theta i(i-1)$\\
$(i,j)\to(i+1,j)$&$\lambda i$\\
$(i,j)\to(i-1,j)$&$\mu i+\theta i(i-1)$\\
$(i,j)\to(i,j+1)$&$\lambda j $\\
$(i,j)\to(i,j-1)$&$\mu j $
\end{tabular}
\end{center}
The process is constructed in such a way that $\widehat{X}_\theta(t) \leq \widehat{X}_0(t)$
for all $t\geq 0$, and the
marginal distributions of
$(\widehat{X}_\theta(\cdot),\widehat{X}_0(\cdot))$ coincide
with those of $X_\theta(\cdot)$ and $X_0(\cdot)$, respectively.

An important question here is how
long this bivariate process stays at the diagonal if $(\widehat{X}_\theta(0),\widehat{X}_0(0))=(m,m)$.  Let $\kappa_\theta$ be the number of jumps  of the process $(\widehat{X}_\theta(\cdot),\widehat{X}_0(\cdot))$ until separation, if the components stay together until extinction we put $\kappa_\theta=\infty$. We show below that
\begin{equation}
\mathbb{P}_{(m,m)}(\kappa_\theta\le n)\le\frac{(m+n)n\theta}{\lambda+\mu},
\label{ka}
\end{equation}
where $\mathbb{P}_{(m,m)}(\cdot)$ stands for the probability conditioned on the bivariate process starting from the state $(m,m)$.

Suppose $\theta\to0$ and the starting level $m$ is fixed.
In the subcritical and critical cases the total number of births and deaths in the linear birth-death processes is almost surely finite and due to \eqref{ka} we may conclude that $\tau_\theta\rightarrow\tau_0$ almost surely. Moreover, since a supercritical branching process conditioned on extinction behaves like a subcritical branching process, we obtain that $\tau_\theta\rightarrow\tau_0$ almost surely provided $\tau_0<\infty$. This observation is summarised in the next section as a part of Theorem \ref{Tsub}.

To prove \eqref{ka} observe that
$$\kappa_\theta=\inf\{k:U_k\neq V_k\},$$ where $(U_k,V_k)_{k\ge0}$ are the consecutive states visited by of the process
$(\widehat{X}_\theta(\cdot),\widehat{X}_0(\cdot))$. Note that the only way for the bivariate process to
get off the diagonal is the move $(i,i)\to(i-1,i)$ having the
probability $\theta (i-1)\over \lambda+\mu+\theta (i-1)$ which is
negligible, if the current level $i$ is not too high. Since
\begin{align*}
\mathbb{P}_{(m,m)}(\kappa_\theta=n|\kappa_\theta>n-1)&=\sum_{i=1}^{m+n-1} \frac{\theta (i-1)}{\lambda +\mu + \theta (i-1)}\mathbb{P}_{(m,m)}(U_{n-1}=i)\\
&\leq \frac{\theta}{\lambda+\mu} \mathbb{E}_{(m,m)}U_{n-1}\leq \frac{(m+n)\theta}{\lambda+\mu},
\end{align*}
we derive \eqref{ka} 
\begin{align*}
\mathbb{P}_{(m,m)}(\kappa_\theta\le n)&=1-\prod_{k=1}^n \mathbb{P}_{(m,m)}(\kappa_\theta>k|\kappa_\theta>k-1)\\
&\leq\sum_{k=1}^n\frac{(m+k)\theta}{\lambda+\mu}\le\frac{(m+n)n\theta}{\lambda+\mu}.
\end{align*}

\section{Main Results}\label{Smr}

We claim that as $\theta\to0$  the following two limit theorems hold for the birth-death process defined by \eqref{BD}.

\begin{Theorem} \label{Tsub}
If $X_\theta(0)=m$, where $m$ is a fixed positive integer, then

(i) in the subcritical and critical cases  when $\lambda\le\mu$
$$\mathbb{P}_{(m,m)}(\tau_\theta\rightarrow\tau_0)=1,$$

(ii) in the supercritical case when $\lambda>\mu$
$$\mathbb{P}_{(m,m)}(\tau_\theta\rightarrow\tau_0|\tau_0<\infty)=1,$$
and for any $x\ge0$
    $$\mathbb{P}_{(m,m)}(\tau_\theta>xc_1\sqrt\theta\, e^{c_2/\theta})|\tau_0=\infty)\rightarrow e^{-x},$$
    where 
    \begin{equation}
c_1=\lambda(\lambda-\mu)^{-2}\sqrt{2\pi/\mu},Ê\hspace{5mm}c_2=\lambda-\mu-\mu\ln(\lambda/\mu).
\label{c}
\end{equation}
    \end{Theorem}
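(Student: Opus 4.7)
The first two statements follow directly from the coupling bound \eqref{ka}. In the subcritical and critical cases $\widehat{X}_0(\cdot)$ makes only finitely many jumps before $\tau_0<\infty$, so choosing $n$ large and then $\theta$ small in \eqref{ka} forces the coupled pair to stay on the diagonal through extinction with probability tending to one; hence $\tau_\theta\to\tau_0$ almost surely. In the supercritical case conditioned on $\{\tau_0<\infty\}$ the classical duality makes the conditioned linear process subcritical (with $\lambda$ and $\mu$ swapped), so the same argument applies.

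For the exponential limit on $\{\tau_0=\infty\}$ I would first reduce to the initial state $i_\theta$: on survival $\widehat{X}_0(t)$ reaches $i_\theta$ in time $O(\log(1/\theta))$, the pair does not separate on this short time by \eqref{ka}, and $\log(1/\theta)=o(\sqrt\theta\,e^{c_2/\theta})$. The key mean computation then comes from Laplace asymptotics applied to \eqref{Et}. Using the Gamma representation
\[
\pi_j=(\theta/\lambda)^{j}\,\Gamma(j+\mu/\theta)/\Gamma(\mu/\theta),
\]
Stirling at $j=i_\theta\sim(\lambda-\mu)/\theta$ gives $\pi_{i_\theta}\sim\sqrt{\mu/\lambda}\,e^{-c_2/\theta}$ (the exponent matches $c_2$ exactly, while the Stirling half-logarithms contribute the $\sqrt{\mu}$ hidden in $c_1$). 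A quadratic expansion $\ln\pi_j-\ln\pi_{i_\theta}\approx\theta(j-i_\theta)^2/(2\lambda)$, combined with $\lambda_k\sim\lambda\,i_\theta$ near the minimum, turns $\sum_k 1/(\lambda_k\pi_k)$ into a Gaussian sum of effective width $\sqrt{\lambda/\theta}$ evaluating to $\sqrt{2\pi\theta/\mu}\,e^{c_2/\theta}/(\lambda-\mu)$. Since the sub-peak part of $\pi_j$ is essentially geometric in $\mu/\lambda$, one also has $\Pi_{i_\theta}\to\lambda/(\lambda-\mu)$, so \eqref{Et} yields $\mathbb{E}_{i_\theta}\tau_\theta\sim c_1\sqrt\theta\,e^{c_2/\theta}$.

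Lifting this mean asymptotic to the exponential law will be the main obstacle, which I would handle by a regenerative decomposition at level $i_\theta$. Let $q_\theta$ be the probability that an excursion from $i_\theta$ hits $0$ before returning; since any path from above $i_\theta$ must cross $i_\theta$ on its way down, only the first-step-down case contributes, and the hitting-probability identities of Section~\ref{Sbd} give
\[
q_\theta=\tfrac{\mu_{i_\theta}}{\lambda_{i_\theta}+\mu_{i_\theta}}\,\pi_{i_\theta-1}/\Pi_{i_\theta}\sim\tfrac{1}{2}\sqrt{\mu/\lambda}\,(\lambda-\mu)\lambda^{-1}\,e^{-c_2/\theta}.
\]
Let $\rho_\theta$ denote the mean length of a returning excursion, computable from \eqref{bi}--\eqref{bik} and of order $\sqrt\theta$, so that $\rho_\theta/q_\theta\sim\mathbb{E}_{i_\theta}\tau_\theta$. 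Then $\tau_\theta$ started at $i_\theta$ is a geometric-$q_\theta$ number of i.i.d.\ successful excursions plus a terminating one, and since $q_\theta\to 0$ the standard geometric-sum-to-exponential argument (comparison of Laplace transforms) yields $q_\theta\tau_\theta/\rho_\theta\Rightarrow\mathrm{Exp}(1)$, equivalently the stated limit. The delicate points to verify are tightness of the rescaled excursion length uniformly in $\theta$ and negligibility of the terminating cycle; both are Laplace-type questions again controlled by \eqref{bi}--\eqref{bik}.
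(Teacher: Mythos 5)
Your overall architecture matches the paper's: the a.s.\ statements come from the coupling bound \eqref{ka}, and the exponential law comes from a regenerative decomposition at the level $i_\theta$ together with Laplace asymptotics for $\sum_k 1/(\lambda_k\pi_k)$ (your Stirling/Gamma-function route to $\pi_{i_\theta}\sim\sqrt{\mu/\lambda}\,e^{-c_2/\theta}$ and the Gaussian sum of width $\sqrt{\lambda/\theta}$ reproduce exactly the paper's Lemma \ref{lem}, \eqref{pit} and \eqref{lap}, and your constants check out). Your excursion scheme differs cosmetically from the paper's (you count full excursions from $i_\theta$ that either return or die, with success probability $q_\theta$; the paper counts returns to $i_\theta$ from above, with ${\rm Geom}(1-Q_{i_\theta})$), but both are valid and your $q_\theta$ is computed correctly for your scheme. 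The level of rigor you leave for the geometric-to-exponential step is comparable to the paper's, which also treats Theorem \ref{Tsup}(i) as a ``straightforward consequence'' of its three lemmata.

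There is, however, one genuine gap: the reduction from the fixed initial state $m$, conditioned on $\{\tau_0=\infty\}$, to the high level. You claim that ``the pair does not separate on this short time by \eqref{ka}'' while $\widehat X_0$ climbs to $i_\theta$. But \eqref{ka} is a bound in the number of \emph{jumps} $n$, not in continuous time, and reaching $i_\theta\sim(\lambda-\mu)/\theta$ from $m$ requires at least $n\asymp\theta^{-1}$ jumps, for which the bound $(m+n)n\theta/(\lambda+\mu)\asymp\theta^{-1}$ is vacuous. The fact that the continuous time is $O(\log(1/\theta))$ does not help. The paper closes this hole with a two-stage argument: fix $\alpha\in(0,\tfrac12)$ and keep the pair coupled for only $\theta^{-\alpha}$ jumps (so \eqref{ka} gives separation probability $O(\theta^{1-2\alpha})\to0$); on $\{\tau_0=\infty\}$ the embedded walk of the conditioned linear process has drift bounded below by $\frac{\lambda-\mu}{\lambda+\mu}$, so after $\theta^{-\alpha}$ jumps $X_\theta$ sits above $\rho\theta^{-\alpha}$ with probability $\to1$; from there one drops the coupling entirely and uses the hitting probability $\mathbb{P}_i(\mbox{reach $n$ before }0)=\Pi_i/\Pi_n$ with $i=\rho\theta^{-\alpha}$, $n=\delta/\theta$, which tends to $1$ because $\Pi_j$ converges. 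You need some version of this intermediate step before you can invoke the excursion analysis at $i_\theta$ (or, equivalently, Theorem \ref{Tsup}(i) with $a=\delta$).
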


Theorem \ref{Tsub} (i) and the first part of (ii) are proven in
the previous section. The proof of the second part of Theorem
\ref{Tsub} (ii) is given after the proof of the first part of the
next theorem.

\begin{Theorem}\label{Tsup} If $X_\theta(0)=m_\theta$ and $\theta m_\theta\rightarrow a>0$, then

(i)  in the supercritical case when $\lambda>\mu$  $$\mathbb{E}_{m_\theta}(\tau_\theta)\sim c_1\sqrt\theta\, e^{c_2/\theta}$$ 
with positive constants $c_1$, $c_2$ given by \eqref{c}, and for any $x\ge0$
    $$\mathbb{P}_{m_\theta}(\tau_\theta>xc_1\sqrt\theta\, e^{c_2/\theta})\rightarrow e^{-x},$$

(ii) in the subcritical case when $\lambda<\mu$  
\begin{equation}
\mathbb{E}_{m_\theta}(\tau_\theta)={\ln(a\theta^{-1})+\ln{\mu-\lambda\over \mu}+\ln{\mu-\lambda\over\mu-\lambda+a}+\gamma\over\mu-\lambda}+o(1),
\label{sme}
\end{equation}
and for any $x\ge0$
\begin{equation}
\mathbb{P}_{m_\theta}\left(\tau_{\theta}\le {\ln(a\theta^{-1})+\ln{\mu-\lambda\over \mu}+\ln{\mu-\lambda\over\mu-\lambda+a}+x\over\mu-\lambda}\right)\rightarrow e^{(-e^{-x})}.
\label{2ii}
\end{equation}

(iii) in the critical case when $\lambda=\mu$
$$\mathbb{E}_{m_\theta}(\tau_\theta)\sim{(\pi/2)^{3/2}\over\sqrt{\theta \mu}}.$$
\end{Theorem}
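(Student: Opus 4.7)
The cornerstone of all three parts is the exact formula \eqref{Et} combined with the explicit product representation $\pi_j = (\theta/\lambda)^j \Gamma(\mu/\theta + j)/\Gamma(\mu/\theta)$, which follows at once from $\pi_j = \prod_{k=1}^j \mu_k/\lambda_k$ and the rates in \eqref{BD}. For each reproductive regime the strategy is the same: identify the $j$-range on which the summand in \eqref{Et} concentrates and approximate the remaining sum by an integral using Stirling's formula and Laplace's method. The distributional statements then require an additional ingredient, either quasi-stationarity (supercritical case) or the coupling of Section \ref{Scou} transferring \eqref{ex1} to $X_\theta$ (subcritical case).

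For part (i), Stirling shows that $\log\pi_j$ has a sharp minimum essentially at $j = i_\theta$, with $\pi_{i_\theta}\sim\sqrt{\mu/\lambda}\,e^{-c_2/\theta}$ and local quadratic behaviour $\pi_k \approx \pi_{i_\theta}\exp(\theta(k-i_\theta)^2/(2\lambda))$. A Laplace evaluation of $\sum_{k\ge 1}(\lambda_k\pi_k)^{-1}$ then produces $(\sqrt{2\pi/\mu}/(\lambda-\mu))\sqrt\theta\,e^{c_2/\theta}$. Since $\pi_j\sim(\mu/\lambda)^j$ for $j$ in a bounded range, the partial sum $\Pi_{k\wedge m_\theta}$ tends to $\lambda/(\lambda-\mu)$ uniformly over the relevant range of $k$, and multiplying reproduces the stated first-moment asymptotic with $c_1 = \lambda(\lambda-\mu)^{-2}\sqrt{2\pi/\mu}$. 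For the exponential limit law I would invoke quasi-stationarity: from any state with mean $m_\theta\sim a/\theta$ the process mixes in a neighbourhood of $i_\theta$ in time of order one, which is negligible compared to the extinction time scale, so the extinction event becomes approximately memoryless at rate $1/\mathbb{E}_{m_\theta}(\tau_\theta)$. The same quasi-stationary conclusion, applied along the coupling \eqref{ka}, also supplies the still-outstanding second half of Theorem \ref{Tsub}(ii).

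For part (ii), I would compare $X_\theta$ to the deterministic skeleton $\dot x = -(\mu-\lambda)x-\theta x^2$ with closed form $x(t) = m_\theta e^{-(\mu-\lambda)t}/[1+(\theta m_\theta/(\mu-\lambda))(1-e^{-(\mu-\lambda)t})]$. With $\theta m_\theta\to a$ one reads off that beyond a time of order one $x(t)$ tracks $((\mu-\lambda)/\theta)\cdot a(\mu-\lambda+a)^{-1}\cdot e^{-(\mu-\lambda)t}$, so the process descends to an $O(1)$ level at time $(\mu-\lambda)^{-1}[\ln(a/\theta)+\ln((\mu-\lambda)/(\mu-\lambda+a))]+O(1)$, the second logarithm being precisely the correction in \eqref{sme}. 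Once below a fixed level the competition term $\theta X_\theta^2$ is negligible, so by the coupling of Section \ref{Scou} the remaining extinction time has the linear-birth-death law from \eqref{ex1} and \eqref{Et0}, which contributes the $\gamma+\ln((\mu-\lambda)/\mu)$ terms and the Gumbel shape. The delicate step is a moderate-deviation bound showing that stochastic fluctuations add only $o(1)$ to the hitting time of the fixed level, thereby preserving both the centering and the Gumbel distribution.

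For part (iii) with $\lambda=\mu$, one has $\log\pi_j\sim\theta j^2/(2\mu)$ on the natural scale $j\asymp\sqrt{\mu/\theta}$. Rescaling $k=u\sqrt{\mu/\theta}$ and $j=v\sqrt{\mu/\theta}$ in \eqref{Et} converts the double sum, up to controllable errors, into a Riemann sum for $(\theta\mu)^{-1/2}\int_0^\infty e^{v^2/2}\int_v^\infty u^{-1}e^{-u^2/2}\,du\,dv$; a short Fubini-plus-substitution argument (write the inner integral as $\tfrac12\int_{v^2/2}^\infty e^{-t}/t\,dt$, swap, and reduce to $\int_0^\infty dw/((w+1)\sqrt w)=\pi$) evaluates the double integral to $(\pi/2)^{3/2}$, yielding the advertised asymptotic. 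The main obstacle across all three parts is the rigorous control of tails and boundary contributions in the Laplace/Riemann-sum approximations; a second non-trivial hurdle is the upgrade from first-moment asymptotics to the limit laws in (i) and (ii), requiring the additional mixing or coupling argument sketched above.
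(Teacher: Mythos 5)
Your first-moment computations are sound and largely parallel the paper's: part (iii) is essentially the paper's argument (the paper rescales the double sum from \eqref{Et} via Lemma \ref{lem} to the integral $\int_0^\infty z^{-1}\int_0^z e^{(t^2-z^2)/2}dt\,dz=(\pi/2)^{3/2}$, which is your double integral after Fubini), and your Laplace evaluation in (i) reproduces the paper's \eqref{lap}. The genuine gap is the exponential limit law in (i) (and the deferred half of Theorem \ref{Tsub}(ii)), which you justify by an appeal to ``quasi-stationarity'' and approximate memorylessness. That is a heuristic, not a proof: making mixing-to-quasi-stationarity rigorous here would require spectral-gap or total-variation estimates that you neither state nor derive. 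The paper instead exploits a concrete regenerative structure: starting the process at the threshold $i_\theta$, the extinction time is (up to the negligible entry time $\tau_\theta'$ and final descent $\tau_\theta''$ of Lemma \ref{l3}) a sum of a Geometric$(1-Q_{i_\theta})$ number of i.i.d.\ excursion durations above $i_\theta$. With $1-Q_{i_\theta}=\pi_{i_\theta}/\Pi_{i_\theta+1}\to0$ (Lemma \ref{l1}) and mean excursion length $M_\theta$ (Lemma \ref{l2}), the exponential limit is then the classical theorem on geometric convolutions, and the mean asymptotics $M_\theta/(1-Q_{i_\theta})\sim c_1\sqrt\theta\,e^{c_2/\theta}$ come for free. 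Without this (or an equivalent renewal/regeneration argument) your distributional claim in (i) is unsupported.

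A second, smaller gap is in (ii): you obtain the expectation \eqref{sme} by reading off the hitting time of the deterministic skeleton and adding the linear-process contribution \eqref{Et0}. Convergence in probability of the descent time plus convergence in distribution of the tail does not yield the $o(1)$-precise expansion of the \emph{mean} without a uniform integrability argument, which you do not supply. The paper avoids this entirely by computing $\mathbb{E}_{m_\theta}(\tau_\theta)$ exactly from \eqref{Et} and showing that the difference from $\mathbb{E}_{m_\theta}(\tau_0)$ converges to $\frac{\lambda}{\mu-\lambda}\ln\frac{\mu-\lambda+a}{\mu-\lambda}$ (relation \eqref{hope}); your fluid-limit route is reserved there for the distributional statement \eqref{2ii}, where your sketch does match the paper (Kurtz's fluid limit down to level $\theta^{-\alpha}$, then the coupling of Section \ref{Scou} and \eqref{ex1}, with the fluctuation control done via Barbour's integral $h(z,t)$ rather than a generic moderate-deviation bound).
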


The asymptotic formulae for $\mathbb{E}_{m_\theta}(\tau_\theta)$ in Theorem  \ref{Tsup} (ii), (iii) are verified by simulations as shown in Figures \ref{fig1}, \ref{fig2}. Comparing the asymptotic formula  \eqref{Et0} for the linear birth-death process to the that for the process with competition \eqref{sme} we see that as $\theta\to0$ and $\theta m_\theta\to a$ the average survival time reduces by
\[\mathbb{E}_{m_\theta}(\tau_0)-\mathbb{E}_{m_\theta}(\tau_\theta)\to{1\over\mu-\lambda}\ln{\mu-\lambda+a\over\mu-\lambda}.\]
As one would expect, this difference becomes small for larger values of $\mu$ and/or smaller values of $a$.

\begin{figure}
\begin{center}
\includegraphics[trim=0mm 60mm 0mm 40mm, clip, height=8cm]{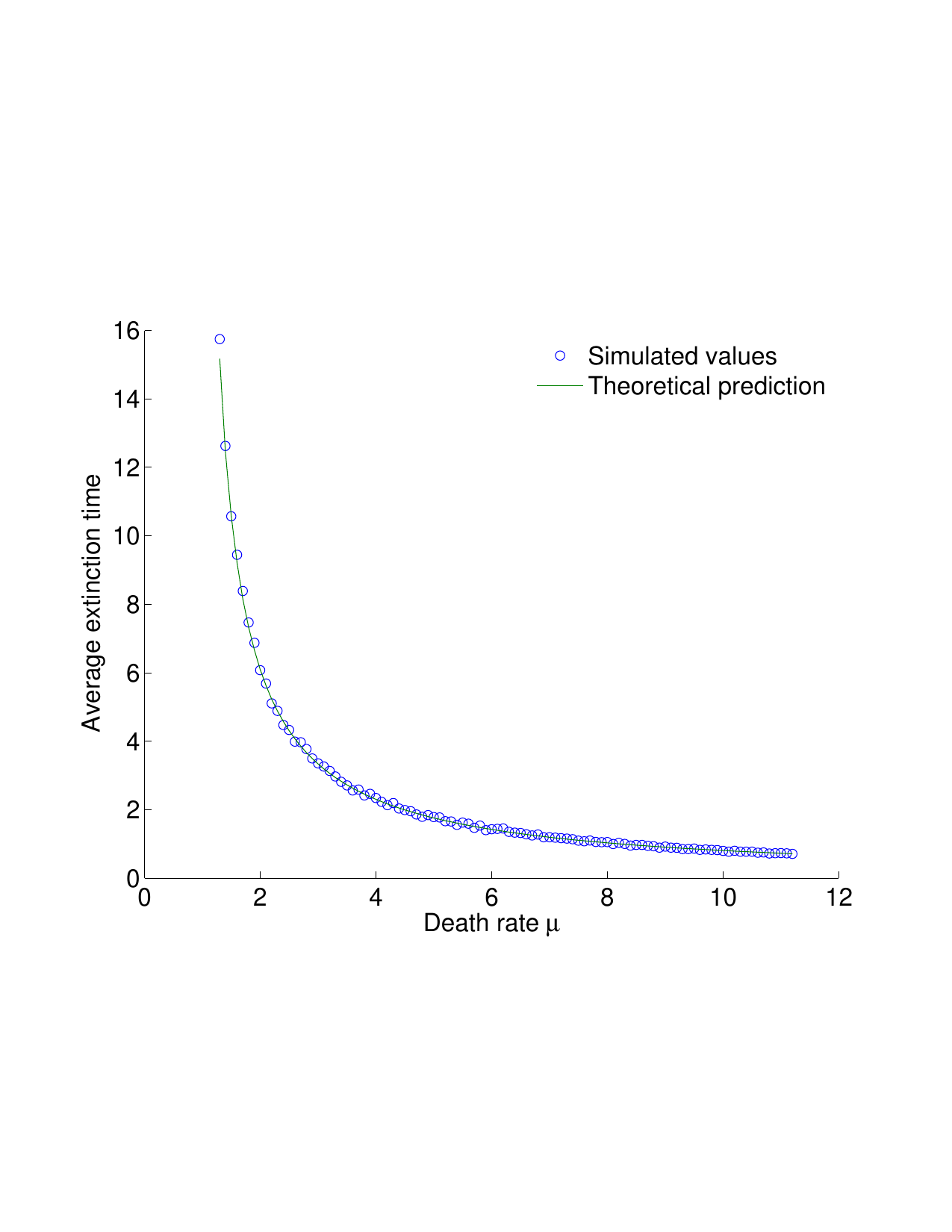} % \includegraphics[trim=0mm 60mm 0mm 40mm, clip, height=5cm]{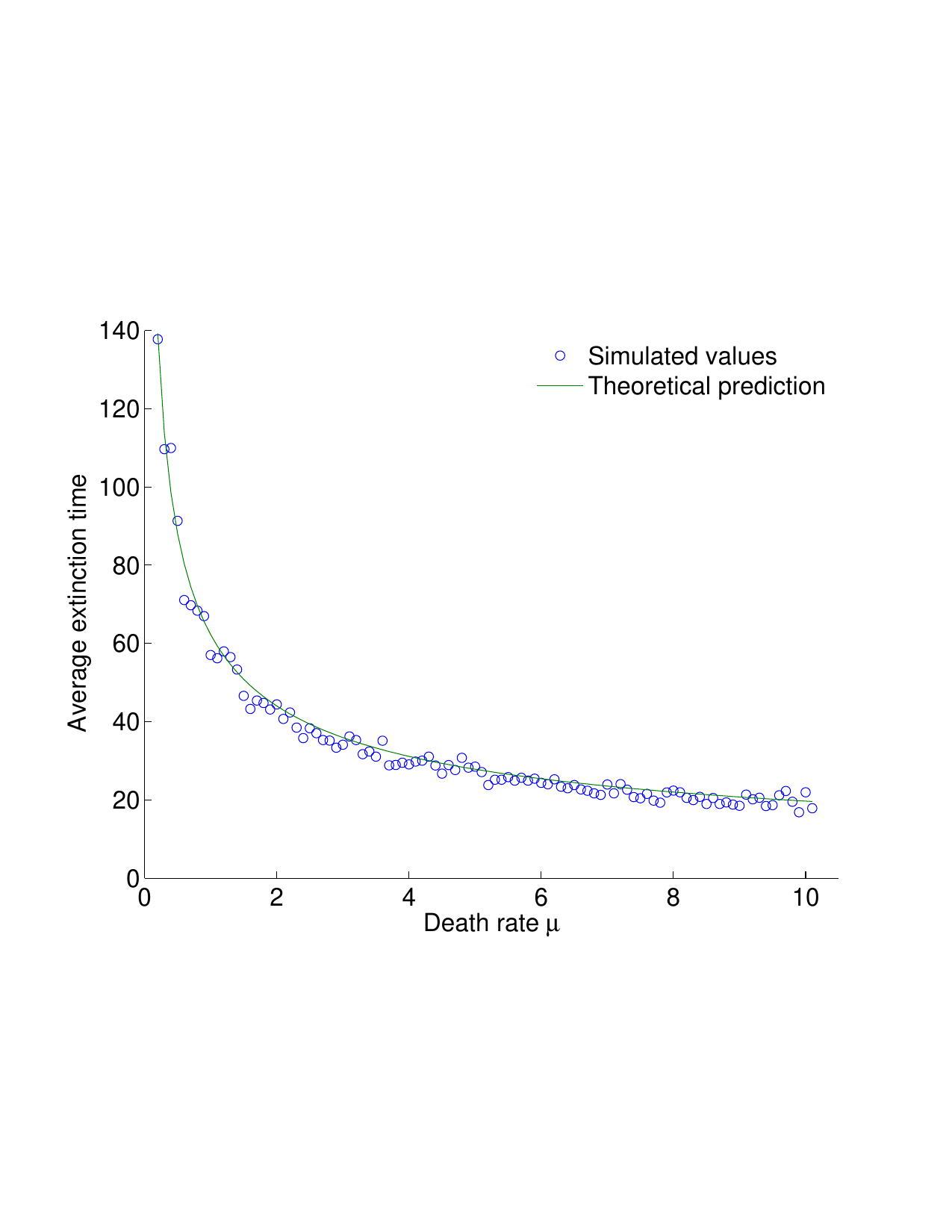}
\end{center}
\caption{Averages of 100 simulations for each value of the death rate are plotted against the values predicted by Theorem  \ref{Tsup} (ii). 
Choice of parameters: initial population size $m_\theta=1000$, competition strength $\theta=0.001$, and birth rate $\lambda=1$. 
}
\label{fig1}
\end{figure}

Notice that Theorem  \ref{Tsup} (ii) provides with a counterpart of the weak convergence \eqref{ex1} for the linear birth-death processes, however, we could not find a counterpart of \eqref{ex2} in the critical case. The following lemma plays a crucial role in the asymptotic analysis of all three cases.
\begin{Lemma} \label{lem}
For our particular model the function $\pi_j=\prod_{i=0}^{j-1}{\mu+\theta i\over \lambda}$ satisfies the approximation
 \begin{equation}
\pi_{j}=(1+j\theta/\mu)^{-1/2}\, e^ {-W(j\theta)/\theta}(1+\eta_j(\theta)),\ j\ge 1,
\label{pij}
\end{equation}
where $W(x)= x-x \ln \frac{\mu + x}{\lambda} - \mu \ln  \frac{\mu + x}{\mu}$ and for any fixed $T>0$
$$\sup_{1\le j\le T/\theta}\left|\eta_j(\theta)\right|\to0,\ \ \theta\to0.$$
\end{Lemma}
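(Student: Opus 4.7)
The plan is to convert the product $\pi_j$ into a ratio of Gamma functions and then invoke Stirling's formula with a uniform remainder. Write $a=\mu/\theta$ (which tends to infinity as $\theta\to0$) and observe that $\mu+i\theta=\theta(i+a)$, so
$$\pi_j=\left(\frac{\theta}{\lambda}\right)^{j}\prod_{i=0}^{j-1}(i+a)=\left(\frac{\theta}{\lambda}\right)^{j}\frac{\Gamma(j+a)}{\Gamma(a)}.$$
This exact identity is the central reduction: it turns the analytic question about the finite product into a question about the ratio of two large arguments of the Gamma function.

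Next I would apply Stirling's expansion
$$\ln\Gamma(x)=\left(x-\tfrac{1}{2}\right)\ln x-x+\tfrac{1}{2}\ln(2\pi)+O(1/x)\qquad(x\to\infty)$$
to both $\Gamma(j+a)$ and $\Gamma(a)$. Taking logarithms, after cancellation of the $\ln(2\pi)$ constants and of the linear-in-$j$ contribution $-j$ coming from the Stirling formula against $-j$ produced by $\ln((y+\mu)/\mu)$ type manipulations, one obtains (with $y=j\theta$)
$$\ln\pi_j=-\frac{y}{\theta}-\frac{y}{\theta}\ln\lambda+\frac{y+\mu}{\theta}\ln(y+\mu)-\frac{\mu}{\theta}\ln\mu-\tfrac{1}{2}\ln\!\left(\frac{y+\mu}{\mu}\right)+R_j(\theta).$$
The first four terms are exactly $-W(y)/\theta$ after collecting the logs into $\ln\frac{\mu+y}{\lambda}$ and $\ln\frac{\mu+y}{\mu}$, and the fifth term is exactly $-\tfrac{1}{2}\ln(1+j\theta/\mu)$. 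This algebra is routine but it is the step most prone to sign or bookkeeping slips, so I would lay it out carefully side by side with the definition of $W$.

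For the uniform control of the error, notice that the Stirling remainders satisfy $R_j(\theta)=O(1/a)+O(1/(j+a))$. Since $a=\mu/\theta$ and $j+a\geq a$, this is $O(\theta)$ uniformly in $1\le j\le T/\theta$, independently of $j$. Setting $1+\eta_j(\theta)=\exp\{R_j(\theta)\}$ then yields $\sup_{1\le j\le T/\theta}|\eta_j(\theta)|=O(\theta)\to 0$, which is the claimed uniform bound. The one minor obstacle is to make sure Stirling is applied with a remainder term that is valid uniformly for $x\geq a$ (not merely pointwise as $x\to\infty$); this is standard — one can for instance use the integral representation $\ln\Gamma(x)=(x-\tfrac12)\ln x-x+\tfrac12\ln(2\pi)+\int_0^\infty \frac{\{t\}-1/2}{x+t}\,dt$ or the Binet formula — which gives an explicit bound $|R|\le C/x$ on all of $[1,\infty)$. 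With that in hand the lemma is proved.
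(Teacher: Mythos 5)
Your proof is correct, but it follows a genuinely different route from the paper. The paper works directly with $\theta\ln\pi_j=\theta\sum_{k=0}^{j-1}\ln\frac{\mu+k\theta}{\lambda}$, views it as a left Riemann sum for $\int_0^{j\theta}\ln\frac{\mu+y}{\lambda}\,dy=-W(j\theta)$, and extracts the prefactor $(1+j\theta/\mu)^{-1/2}$ from the trapezoidal correction $-\frac{\theta}{2}\sum_{k=0}^{j-1}\frac{\theta}{\mu+k\theta}\approx-\frac{\theta}{2}\ln(1+j\theta/\mu)$, leaving a final uniform-convergence check of the product $\prod_k e^{-\theta/(\mu+k\theta)}$ to the reader. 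You instead use the exact identity $\pi_j=(\theta/\lambda)^j\,\Gamma(j+a)/\Gamma(a)$ with $a=\mu/\theta$ and invoke Stirling with the Binet remainder; since both arguments are at least $a=\mu/\theta\to\infty$, the error is $O(\theta)$ uniformly over all $j\ge1$ (in fact you do not even need the restriction $j\le T/\theta$), which is a quantitatively stronger conclusion than the lemma asserts. I verified your log bookkeeping: the four leading terms do reassemble into $-W(j\theta)/\theta$ and the half-log term is exactly $-\frac12\ln(1+j\theta/\mu)$. One small wording quibble: the $-j$ produced by Stirling (from $-(j+a)+a$) is not \emph{cancelled} but rather becomes the $-y/\theta$ term inside $-W(y)/\theta$; your displayed formula is nonetheless correct. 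Overall, your argument buys an explicit error rate and uniformity for free from classical Gamma-function estimates, at the cost of importing Stirling/Binet, whereas the paper's Riemann-sum comparison is more elementary and self-contained but less explicit about the remainder.
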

\begin{figure}
\begin{center}
 \includegraphics[trim=0mm 60mm 0mm 40mm, clip, height=8cm]{}
\end{center}
\caption{Averages of 100 simulations for $m_\theta=1000$, $\theta=0.001$ are plotted against the values predicted by Theorem  \ref{Tsup} (iii). 
}
\label{fig2}
\end{figure}

\begin{proof} 
Observe that
\begin{align*}
\theta\ln \pi_{j}&=\theta\sum\limits_{k=0}^{j-1}\ln\frac{\mu+k\theta}{\lambda}\\
&= \int_0^{j\theta} \! \ln\frac{\mu +y}{\lambda} \, \mathrm{d}
y -\sum_{k=0}^{j-1} \int_{k\theta}^{(k+1)\theta} \ln\frac{\mu +
y}{\mu + k\theta}\,dy\\
&= -W(j\theta) - \frac{\theta}{2}\sum_{k=0}^{j-1}
\frac{\theta}{\mu +k\theta}+O(j\theta^3).
\end{align*}
It remains to verify that 
$$\sup_{j\ge0}\left|\prod_{k=0}^je^{-{\theta\over\mu+k\theta}}-{\mu\over\mu+j\theta}\right|\to0.$$
\end{proof}

\section{Proofs for the supercritical case}

In this section we first prove Theorem \ref{Tsup} (i) and then Theorem \ref{Tsub} (ii) borrowing key ideas from \cite{AD}.

 We start by considering the supercritical $X_\theta(\cdot)$ with the initial state $X_\theta(0)=i_\theta$ given by  \eqref{ite}. It will take a geometric number $K_\theta\sim{\rm Geom}(1-Q_{i_\theta})$ of returns to the initial state from above before the extinction event. Let $\tau_{\theta}'$ be the time needed for $X_\theta(\cdot)$ to enter the level $i_\theta$ from above, and $\tau_{\theta}''$  be the
    absorption time counted from the last entrance moment to the state $i_\theta$ from above. If there were no visits of $i_\theta$ from above, we put $\tau_{\theta}'=\tau_{\theta}$  and $\tau_{\theta}''=0$.
Clearly,  $\tau_{\theta}$ is the sum of  $\tau_{\theta}'$,  $\tau_{\theta}''$, and of $K_\theta$ independent durations of the corresponding excursions.  It follows that the statement (ii) of Theorem \ref{Tsup} is a straightforward consequence of the next three lemmata.

\begin{Lemma} \label{l1}
In the supercritical case as $\theta\to0$
\begin{equation}
1-Q_{i_\theta}\sim
 \frac{(\lambda-\mu)\sqrt{\mu}}{\lambda^{3/2}} e^ {-c_2/\theta}.
\label{qi}
\end{equation}
\end{Lemma}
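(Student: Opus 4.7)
The plan is to exploit the exact identity $1-Q_i=\pi_i/\Pi_{i+1}$ (which follows from the formula $Q_i=\Pi_i/\Pi_{i+1}$ stated in Section \ref{Sbd} together with $\Pi_{i+1}-\Pi_i=\pi_i$). So the task reduces to understanding the numerator $\pi_{i_\theta}$ and the denominator $\Pi_{i_\theta+1}=\sum_{j=0}^{i_\theta}\pi_j$ separately, under the asymptotics $\theta\to 0$, with $i_\theta\theta\to \lambda-\mu$ by \eqref{ite}.

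\smallskip

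\textbf{Numerator.} Apply Lemma \ref{lem} at $j=i_\theta$. One checks by direct differentiation that $W'(x)=\ln(\lambda/(\mu+x))$, so $W$ attains its maximum at $x=\lambda-\mu$ with $W(\lambda-\mu)=\lambda-\mu-\mu\ln(\lambda/\mu)=c_2$. Also, $(1+i_\theta\theta/\mu)^{-1/2}\to (\lambda/\mu)^{-1/2}=\sqrt{\mu/\lambda}$. Thus
$$\pi_{i_\theta}\sim \sqrt{\mu/\lambda}\; e^{-c_2/\theta}.$$

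\smallskip

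\textbf{Denominator.} I claim $\Pi_{i_\theta+1}\to \lambda/(\lambda-\mu)$. The key point is that the series is dominated by its initial terms, since $\pi_j$ grows like the geometric sequence $(\mu/\lambda)^j<1$ for small $j$. More precisely, Taylor-expand $W(x)=x\ln(\lambda/\mu)-x^2/(2\mu)+O(x^3)$ near $x=0$, so for $j=j(\theta)$ with $j\theta\to 0$ Lemma \ref{lem} gives $\pi_j=(\mu/\lambda)^j\bigl(1+o(1)\bigr)$ uniformly. Fix an arbitrarily large integer $N$. Then
$$\sum_{j=0}^{N}\pi_j \longrightarrow \sum_{j=0}^{N}(\mu/\lambda)^j,\quad \theta\to 0,$$
and letting $N\to\infty$ the right-hand side converges to $\lambda/(\lambda-\mu)$. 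To control the tail $\sum_{j=N+1}^{i_\theta}\pi_j$, I will use that, by the uniform approximation in Lemma \ref{lem}, $\pi_j\le C\,e^{-W(j\theta)/\theta}$ and $W(x)\ge x\ln(\lambda/\mu)-x^2/(2\mu)$ is bounded below by, say, $\tfrac{1}{2}x\ln(\lambda/\mu)$ for $x\in[N\theta,\lambda-\mu]$ and all sufficiently small $\theta$ (if $N$ is chosen so that the linear term dominates the quadratic). This gives a geometric tail bound $\sum_{j>N}\pi_j\le C(\mu/\lambda)^{N/2}$ uniformly in small $\theta$, which can be made arbitrarily small by taking $N$ large. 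Combining the two estimates yields $\Pi_{i_\theta+1}\to \lambda/(\lambda-\mu)$.

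\smallskip

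\textbf{Combination.} Dividing the numerator by the denominator,
$$1-Q_{i_\theta}=\frac{\pi_{i_\theta}}{\Pi_{i_\theta+1}}\sim \frac{\sqrt{\mu/\lambda}\cdot e^{-c_2/\theta}}{\lambda/(\lambda-\mu)}=\frac{(\lambda-\mu)\sqrt{\mu}}{\lambda^{3/2}}\,e^{-c_2/\theta}.$$

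The main technical nuisance is the tail estimate for $\Pi_{i_\theta+1}$: one has to be sure that $\pi_j$ is genuinely small on the whole intermediate range $[N,i_\theta]$, not just near $j=i_\theta$. This is handled by the shape of $W$ (strictly positive on $(0,\lambda-\mu]$) and the uniformity clause in Lemma \ref{lem}. Everything else is just plugging the threshold $j\theta\to\lambda-\mu$ into the explicit form of $\pi_j$.
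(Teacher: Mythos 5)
Your overall route is exactly the paper's: write $1-Q_{i_\theta}=\pi_{i_\theta}/\Pi_{i_\theta+1}$, get $\pi_{i_\theta}\sim\sqrt{\mu/\lambda}\,e^{-c_2/\theta}$ from Lemma \ref{lem} (using that $W'(x)=\ln\frac{\lambda}{\mu+x}$ vanishes at $\lambda-\mu$, so the rounding in $i_\theta$ costs only $O(\theta^2)$ in the exponent), and show $\Pi_{i_\theta+1}\to\lambda/(\lambda-\mu)$. The paper simply asserts the latter limit without proof, so your attempt to justify it is the only place where the two arguments differ in substance.

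That justification, however, contains one false step: the claimed lower bound $W(x)\ge\tfrac12 x\ln(\lambda/\mu)$ on the whole range $x\in[N\theta,\lambda-\mu]$ does not hold near the right endpoint. Indeed, writing $r=\lambda/\mu>1$, the inequality at $x=\lambda-\mu$ reads $r-1-\ln r\ge\tfrac12(r-1)\ln r$, and the function $h(r)=\tfrac12(r-1)\ln r-(r-1)+\ln r$ satisfies $h(1)=h'(1)=0$ and $h''(r)=\frac{r-1}{2r^2}>0$, so $h>0$ for every $r>1$; your bound therefore fails for all admissible parameters, not just borderline ones (equivalently, the Taylor inequality $x\ln(\lambda/\mu)-x^2/(2\mu)\ge\tfrac12 x\ln(\lambda/\mu)$ requires $x\le\mu\ln(\lambda/\mu)<\lambda-\mu$). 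The conclusion you want is still true and the repair is routine: split the tail into $N<j\le\delta/\theta$ and $\delta/\theta<j\le i_\theta$ for a small fixed $\delta$. On the first range $W'\ge\tfrac12\ln(\lambda/\mu)$ (by continuity of $W'$ at $0$), giving the geometric bound $\pi_j\le C(\mu/\lambda)^{j/2}$; on the second range $W(j\theta)\ge W(\delta)>0$ by monotonicity of $W$ on $[0,\lambda-\mu]$, so the at most $O(1/\theta)$ remaining terms contribute $O(\theta^{-1}e^{-W(\delta)/\theta})=o(1)$. With that two-range estimate the rest of your argument, and the final arithmetic $\sqrt{\mu/\lambda}\cdot\frac{\lambda-\mu}{\lambda}=\frac{(\lambda-\mu)\sqrt\mu}{\lambda^{3/2}}$, is correct.
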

\begin{Lemma} \label{l2}
In the supercritical case  the expected duration $M_\theta$ of an excursion starting from $i_\theta$ and returning to $i_\theta$ from above satisfies
    $$ M_{\theta}\sim \sqrt{\frac{2\pi\theta}{\lambda}}\frac{1}{\lambda-\mu},\ \theta\to0.$$
\end{Lemma}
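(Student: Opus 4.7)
The excursion decomposes as $M_\theta = A_\theta + B_\theta$, where $A_\theta = \beta_{i_\theta}^{0}$ is the expected time to climb from $i_\theta$ to $i_\theta+1$ conditional on reaching $i_\theta+1$ before absorption at $0$ (by \eqref{bi}), and $B_\theta = \mathbb{E}_{i_\theta+1}(\mbox{time to reach }i_\theta)$ is the unconditional descent time from $i_\theta+1$ (by \eqref{Ett}); by the strong Markov property applied at the first passage of $i_\theta+1$, the conditioning on the cycle succeeding does not affect $B_\theta$. Thus
\[
A_\theta=\frac{\pi_{i_\theta}}{\Pi_{i_\theta}\Pi_{i_\theta+1}}\sum_{j=1}^{i_\theta}\frac{\Pi_j^2}{\lambda_j\pi_j},\qquad
B_\theta=\pi_{i_\theta}\sum_{k=i_\theta+1}^{\infty}\frac{1}{\lambda_k\pi_k}.
\]

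The plan is to evaluate both via Lemma \ref{lem} and Laplace's method. Computing, $W'(x)=-\ln\bigl((\mu+x)/\lambda\bigr)$ vanishes only at $x^*=\lambda-\mu$, where $W$ attains its unique maximum $c_2$, with $W''(x^*)=-1/\lambda$. Consequently $\pi_{i_\theta}\sim\sqrt{\mu/\lambda}\,e^{-c_2/\theta}$, and $(\lambda_k\pi_k)^{-1}$ is Gaussian-peaked at $k=i_\theta$ with width $\sqrt{\lambda/\theta}$: for $r=o(\theta^{-1/2})$,
\[
\frac{1}{\lambda_{i_\theta+r}\pi_{i_\theta+r}}\sim\frac{\theta\,e^{c_2/\theta}}{(\lambda-\mu)\sqrt{\lambda\mu}}\,e^{-r^2\theta/(2\lambda)}.
\]
A Riemann-sum approximation yields $\sum_{r\ge 1}e^{-r^2\theta/(2\lambda)}\sim\frac{1}{2}\sqrt{2\pi\lambda/\theta}$, and the analogous relation holds for $r\le 0$. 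Separately, since $\pi_l\sim(\mu/\lambda)^l$ for $l$ bounded while $\pi_l$ is exponentially small near $i_\theta$, one has $\Pi_j\to\lambda/(\lambda-\mu)$ uniformly for $j$ in the Gaussian window around $i_\theta$ where the sums concentrate; in particular $\Pi_j^2/(\Pi_{i_\theta}\Pi_{i_\theta+1})\to 1$ uniformly in that window.

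Applying these inputs to $A_\theta$ collapses it to $\pi_{i_\theta}\sum_{j=1}^{i_\theta}(\lambda_j\pi_j)^{-1}$, the left half of the Gaussian sum, while $B_\theta$ is the matching right half. Adding them reassembles the full Gaussian integral, giving
\[
M_\theta\sim\pi_{i_\theta}\sum_{j\ge 1}\frac{1}{\lambda_j\pi_j}\sim\sqrt{\mu/\lambda}\,e^{-c_2/\theta}\cdot\frac{\theta\,e^{c_2/\theta}}{(\lambda-\mu)\sqrt{\lambda\mu}}\sqrt{\frac{2\pi\lambda}{\theta}}=\frac{1}{\lambda-\mu}\sqrt{\frac{2\pi\theta}{\lambda}},
\]
as claimed. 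The main technical obstacle is rigorously bounding the tails of the sums outside the Gaussian window; these are controlled by the strict inequality $W(x)<c_2$ away from $x^*$ (with quadratic decay near $x^*$) together with the uniform error estimate $\sup_{1\le j\le T/\theta}|\eta_j(\theta)|\to0$ from Lemma \ref{lem}.
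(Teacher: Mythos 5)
Your proposal is correct and follows essentially the same route as the paper: the same decomposition of the excursion into the conditional ascent time $\beta_{i_\theta}^{0}$ from \eqref{bi} plus the unconditional descent time from \eqref{Ett} (the paper merely packages the two halves as the single weighted sum $\pi_{i_\theta}\sum_k\psi_k(i_\theta)/(\lambda_k\pi_k)$), followed by \eqref{pit} and Laplace's method centred at $x^*=\lambda-\mu$ with the observation that the $\Pi$-weights tend to $1$ near the peak. The one point to tighten is the far tail $k>T/\theta$ of the descent sum, where neither Lemma \ref{lem} nor the inequality $W(x)<c_2$ applies; the paper handles it with the explicit geometric bound $\pi_j\ge\pi_{i_\theta}2^{j-l_\theta}$ for $j>l_\theta=\lceil(2\lambda-\mu)/\theta\rceil$.
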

\begin{Lemma} \label{l3}
Under the assumptions of Theorem \ref{Tsup} (i) for any fixed positive $\epsilon$
    $$\mathbb{E}_{m_\theta}(\tau_{\theta}')=o(e^{\epsilon/\theta}),\ \ \mathbb{E}_{m_\theta}(\tau_{\theta}'')=o(e^{\epsilon/\theta}),\ \ \theta\to0.$$
\end{Lemma}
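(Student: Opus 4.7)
\noindent\textbf{Proof plan for Lemma \ref{l3}.}

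The plan is to express both expectations as explicit sums via the first-passage formulas of Section~\ref{Sbd}, then exploit the monotonicity of $\pi_j$ on $\{0,\ldots,i_\theta\}$ to obtain polynomial-in-$\theta^{-1}$ bounds, which are trivially $o(e^{\epsilon/\theta})$ for every $\epsilon>0$.

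I first handle $\tau_\theta''$. By its definition this is the first-passage time from $i_\theta$ to $0$, conditioned on reaching $0$ before $i_\theta+1$, so formula \eqref{sum} with $u=0$, $v=i_\theta$ gives
\[
\mathbb{E}(\tau_\theta'')\;=\;\sum_{j=1}^{i_\theta}\frac{\Pi_j(\Pi_{i_\theta+1}-\Pi_j)}{\lambda_j\pi_j\,\Pi_{i_\theta+1}}.
\]
Since $\pi_{k+1}/\pi_k=(\mu+k\theta)/\lambda\le 1$ for $0\le k\le i_\theta-1$, the sequence $\pi_k$ is nonincreasing on $[0,i_\theta]$, so $\Pi_{i_\theta+1}-\Pi_j=\sum_{k=j}^{i_\theta}\pi_k\le i_\theta\pi_j$. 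Combining this with $\Pi_j\le\Pi_{i_\theta+1}$ yields
\[
\mathbb{E}(\tau_\theta'')\;\le\;\frac{i_\theta}{\lambda}\sum_{j=1}^{i_\theta}\frac{1}{j}\;=\;O\bigl(\theta^{-1}\log\theta^{-1}\bigr).
\]

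Next I handle $\tau_\theta'$, distinguishing whether $m_\theta>i_\theta$ or $m_\theta\le i_\theta$. If $m_\theta>i_\theta$, then $\tau_\theta'$ is the first-passage time from $m_\theta$ down to $i_\theta$, so $\mathbb{E}_{m_\theta}(\tau_\theta')=\sum_{k=i_\theta+1}^{m_\theta}T_k$ with $T_k=\pi_{k-1}\sum_{l\ge k}(\lambda_l\pi_l)^{-1}$ by \eqref{Ett}. For $l\ge k>i_\theta$ the ratios $\pi_{l+1}/\pi_l=(\mu+l\theta)/\lambda$ are all at least $r=(\mu+k\theta)/\lambda>1$ and increasing in $l$, so a geometric-tail estimate gives $T_k\le C/[(\lambda-\mu)(k-i_\theta)]$, whose sum is $O(\log\theta^{-1})$. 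If instead $m_\theta\le i_\theta$, the strong Markov property applied at the exit from $\{1,\ldots,i_\theta\}$ yields
\[
\mathbb{E}_{m_\theta}(\tau_\theta')\;\le\;\mathbb{E}_{m_\theta}(\sigma)+T_{i_\theta+1},
\]
where $\sigma$ is the exit time. Writing $\mathbb{E}_{m_\theta}(\sigma)$ as the convex combination (over exit direction) of the conditional downward sum $\sum_{k=1}^{m_\theta}\beta_k^{i_\theta+1}\le\mathbb{E}(\tau_\theta'')$ and the conditional upward sum $\sum_{l=m_\theta}^{i_\theta}\beta_l^0\le\mathbb{E}(\tau_\theta'')$ (both bounds via \eqref{sum}), I obtain $\mathbb{E}_{m_\theta}(\sigma)\le\mathbb{E}(\tau_\theta'')=O(\theta^{-1}\log\theta^{-1})$; together with $T_{i_\theta+1}=O(1)$ from the previous case this produces the same bound for $\mathbb{E}_{m_\theta}(\tau_\theta')$.

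The main obstacle is the geometric-tail estimate for $T_k$ when $k$ is just above $i_\theta$: there $\mu_k-\lambda_k$ may be as small as $O(\theta)$, so the naive bound $T_k\le 1/(\mu_k-\lambda_k)$ is useless. What rescues the argument is that the ratios $\pi_{l+1}/\pi_l$ strictly increase with $l$, so even if the first one barely exceeds $1$, the products $\prod_{i=0}^{j-1}\lambda_{k+i}/\mu_{k+i}$ decay fast enough to produce the extra factor $1/(k-i_\theta)$ and a bound summable in $k$ to a logarithm in $\theta^{-1}$, which stays safely below any exponential $e^{\epsilon/\theta}$.
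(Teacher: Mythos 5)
Your proposal follows essentially the same route as the paper's proof: the identical decomposition of $\tau_\theta''$ and of $\tau_\theta'$ (split according to whether $m_\theta$ exceeds $i_\theta$) into the conditional passage times of Section~\ref{Sbd} via \eqref{sum} and \eqref{Ett}. The paper merely asserts that the resulting sums are $o(e^{\epsilon/\theta})$, whereas you supply the missing estimates --- monotonicity of $\pi_j$ below $i_\theta$ and the increasing-ratio tail bound above $i_\theta$ (correctly flagging that a naive geometric bound degenerates just above the threshold) --- arriving at the stronger polynomial bound $O(\theta^{-1}\log\theta^{-1})$; these estimates check out.
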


\begin{proof} {\sc of Lemma \ref{l1}}.
It is shown in Section \ref{Sbd} that $1-Q_{i_\theta}=\frac{\pi_{i_\theta}}{\Pi_{i_\theta+1}}$.
According to Lemma \ref{lem}
\begin{equation}
\pi_{i_\theta}\sim\sqrt{\mu\over\lambda}\,e^{-c_2/\theta},
\label{pit}
\end{equation}
and in view of $\Pi_{i_\theta}\to\frac{\lambda}{\lambda-\mu}$ we arrive at \eqref{qi}.
\end{proof}

\begin{proof} {\sc of Lemma \ref{l2}}.
We show first that
\begin{align} 
\sum_{j=1}^\infty\frac{1}{\lambda_j \pi_j}\sim \frac{1}{(\lambda-\mu)
\sqrt{\mu}}\sqrt{2\pi\theta}e^{c_2/\theta}
\label{lap}
\end{align}
by dividing the left sum in two parts using the threshold $l_\theta=\lceil{2\lambda-\mu\over\theta}\rceil$.  Observe that by definitions of $i_\theta$ and
$l_\theta$ we have 
$\pi_j\geq
\pi_{i_\theta}\cdot 2^{j-l_\theta}$ for
$j>l_\theta$. Thus applying \eqref{pit} we obtain
$$\sum_{j=l_\theta+1}^{\infty} \frac{1}{\lambda_j \pi_j}=O(\theta/\pi_{i_\theta})=O(\theta e^{c_2/\theta}).$$

On the other hand, due to \eqref{pij}
$$\sum_{j=1}^{l_\theta} \frac{1}{j\pi_j}
\sim\int_0^{2\lambda-\mu}\frac{\sqrt{\mu+s}}{s\sqrt{\mu}}e^{W(s)/\theta}ds.$$
As the function $W(\cdot)$ has its maximum over the integration interval at the point $\lambda-\mu$ we conclude using the Laplace method that
$$\sum_{j=1}^{l_\theta} \frac{1}{\lambda_j\pi_j}\sim\frac{\sqrt{2\pi\theta}e^{c_2/\theta}}{(\lambda-\mu)\sqrt{\mu}}$$ 
completing the proof of \eqref{lap}.

Combining \eqref{bi} and \eqref{Ett} we get
$$ M_{\theta}=\pi_{i_\theta}\sum_{k=1}^\infty\frac{\psi_{k}(i_\theta)}{\lambda_k\pi_{k}}, \mbox{ where }\psi_{k}(i)=\min\left\{\frac{\Pi_{k}^2}{\Pi_{i}\Pi_{i+1}},1\right\}$$
Relations \eqref{pit} and   \eqref{lap} give
$$\pi_{i_\theta}\sum_{k=1}^\infty\frac{1}{\lambda_k\pi_{k}}\sim \sqrt{\frac{2\pi\theta}{\lambda}}\frac{1}{\lambda-\mu},$$
and it remains only to observe that $\psi_{k}(i_\theta)\to1$ uniformly over $k$ larger than $\epsilon/\theta$ however small  is a fixed positive $\epsilon$.
\end{proof}

\begin{proof} {\sc of Lemma \ref{l3}}.
Notice that due to \eqref{sum}
$$E(\tau_{\theta}'')<\mathbb{E}_{i_\theta}(\mbox{time to reach 0 } | \mbox{ reach 0 before $i_\theta+1$})=\sum_{k=1}^{i_\theta}\beta_k^{i_\theta+1}=o(e^{\epsilon/\theta}).$$
On the other hand, for $m_\theta\le i_\theta$
\begin{align*}
\mathbb{E}_{m_\theta}(\tau_{\theta}')&<\mathbb{E}_{1}(\mbox{time to reach $i_\theta+1$ } | \mbox{ reach $i_\theta+1$ before 0})\\
& \hspace{5mm}+\mathbb{E}_{i_\theta+1}(\mbox{time to reach $i_\theta$})\\
&=\sum_{i=1}^{i_\theta}\beta_i^0+\pi_{i_\theta}\sum_{k=i_\theta+1}^{\infty}\frac{1}{\lambda_k\pi_k}=o(e^{\epsilon/\theta}),
\end{align*}
and for $m_\theta> i_\theta$
\begin{align*}
\mathbb{E}_{m_\theta}(\tau_{\theta}')&=\mathbb{E}_{m_\theta}(\mbox{time to reach $i_\theta$})=\sum_{k=i_\theta+1}^{\infty}\frac{\Pi_{k\wedge m_\theta}-\Pi_{i_\theta}}{\lambda_k\pi_k}=o(e^{\epsilon/\theta}).
\end{align*}

\end{proof}

\begin{proof} {\sc of Theorem \ref{Tsub}} (ii).
Put 
$$S_{\theta}(\delta)=\inf\{t\geq 0: \widehat{X}_0(t)
\geq \delta/\theta\}.$$
In view of the previous analysis it is enough to show that for some fixed $\delta\in(0,{\lambda-\mu\over\mu})$
$$\mathbb{P}_{(m,m)}(\tau_{\theta}> S_{\theta}(\delta)|\tau_0 = \infty) \rightarrow 1.$$
We verify this by showing that for some fixed $\alpha\in(0,\frac{1}{2})$, $\rho\in(0,\frac{\lambda-\mu}{\lambda+\mu})$
\begin{equation}
\mathbb{P}_{(m,m)}(\tau_{\theta}> S_{\theta}(\rho\theta^{1-\alpha})|\tau_0 = \infty) \rightarrow 1,
\label{al}
\end{equation}
and
\begin{equation}
\mathbb{P}_{m}(\tau_{\theta}> S_{\theta}(\delta)|\tau_{\theta}> S_{\theta}(\rho\theta^{1-\alpha})) \rightarrow 1.
\label{ad}
\end{equation}

According to \eqref{ka} we have
\[\mathbb{P}_{(m,m)}(U_k=V_k, k=0,\ldots,\theta^{-\alpha}|\tau_0 = \infty) \rightarrow 1.
\]
Note that 
$$ \mathbb{P}_m(V_{\theta^{-\alpha}}\geq \rho\theta^{-\alpha}|V_{k}\neq 0; 1\leq
k\leq \theta^{-\alpha})\rightarrow 1,$$ 
since under the condition of
non-extinction $V_n$ is just a simple random walk restricted to
the set of positive integers, having a drift that is bounded from
below by $\frac{\lambda-\mu}{\lambda+\mu}>0$. Combining the last two relations we arrive at \eqref{al}.

Finally, \eqref{ad} follows from the fact that the probability
$$\mathbb{P}_{i}(\mbox{reach $n$  before $0$})={\Pi_{i}}/{\Pi_{n}}$$
with $i=\rho\theta^{-\alpha}$ and $n=\delta/\theta$ tends to one as $\theta\to0$.
\end{proof}

\section{Proof of Theorem \ref{Tsup} (ii)}
In the subcritical case $s=\lambda/\mu$ lies in $(0,1)$. To establish the approximation formula   \eqref{sme} we refer to \eqref{Et} which gives
\begin{align*}
\mathbb{E}_m(\tau_\theta)&=\sum_{j=0}^{m-1}\sum_{k=j+1}^{\infty}\frac{\pi_j}{\lambda k\pi_k}=\lambda^{-1}\sum_{j=0}^{m-1}\sum_{k=j+1}^{\infty}\frac{s^{k-j}}{ k}\cdot
r_j\cdots r_{k-1},
\end{align*}
where $r_i={\mu\over\mu+i\theta}$, and on the other hand,
\begin{align*}
\mathbb{E}_m(\tau_0)&=\lambda^{-1}\sum_{j=0}^{m-1}\sum_{k=j+1}^{\infty}\frac{s^{k-j}}{ k}.
\end{align*}
Thus in view of  \eqref{Et0} we have to verify that
\begin{equation}
\sum_{j=0}^{a/\theta}\sum_{k=j+1}^{\infty}\frac{s^{k-j}}{ k}\cdot\big(1-
r_j\cdots r_{k-1}\big)
\to{\lambda\over\mu-\lambda}\ln{\mu-\lambda+a\over\mu-\lambda}.\label{hope} 
\end{equation}

To prove \eqref{hope} choose arbitrary but fixed small $\epsilon$ and large $T$ and consider
\begin{align*}
\sum_{j=\epsilon/\theta}^{a/\theta}\sum_{k=j+1}^{j+T}\frac{s^{k-j}}{ k}\cdot\big(1-r_j\cdots r_{k-1}\big)
&=\sum_{j=\epsilon/\theta}^{a/\theta}\sum_{k=j+1}^{j+T}\frac{s^{k-j}}{ k}\cdot\big(1-e^{-{V(k\theta)-V(j\theta)\over\theta}}%{\sqrt{\mu+k\theta}\over \sqrt{\mu+j\theta}}
\big)+o(1),
\end{align*}
where a counterpart of Lemma \ref{lem} was used with 
\begin{align*}
V(x)&=(x+\mu)\ln{x+\mu\over\mu}-x,\\
V(x)-V(y)&=(x-y)\ln{x+\mu\over\mu}+(y+\mu)\Big(\ln{x+\mu\over y+\mu}-{x-y\over y+\mu}\Big).
\end{align*}
Due to the last equality we can replace $e^{-{V(k\theta)-V(j\theta)\over\theta}}$ with $\big({\mu\over j\theta+\mu}\big)^{k-j}$ and get
\begin{align*}
\sum_{j=\epsilon/\theta}^{a/\theta}\sum_{k=j+1}^{j+T}\frac{s^{k-j}}{ k}\cdot\big(1-r_j\cdots r_{k-1}\big)
&=\sum_{j=\epsilon/\theta}^{a/\theta}\sum_{l=1}^{T}\frac{s^{l}}{j}\cdot\big(1-(1+j\theta/\mu)^{-l}\big)+o(1).
\end{align*}
Since 
\[\sum_{l=1}^{\infty}s^{l}\big(1-(1+j\theta/\mu)^{-l}\big)={1\over1-s}-{1\over1-s(1+j\theta/\mu)^{-1}}={s\over1-s}\cdot{j\theta\over\mu(1-s)+j\theta},\]
to derive \eqref{hope} it remains to observe that
\begin{align*}
\sum_{j=\epsilon/\theta}^{a/\theta}\sum_{l=1}^{T}\frac{s^{l}}{j}\cdot\big(1-(1+j\theta/\mu)^{-l}\big)={s\over1-s}\sum_{j=\epsilon/\theta}^{a/\theta}{\theta\over\mu(1-s)+j\theta}+
\rho_T(\theta),
\end{align*}
where 
$$\limsup_{T\to\infty}\limsup_{\theta\to\infty}|\rho_T(\theta)|=0,$$
and
\begin{align*}
\sum_{j=\epsilon/\theta}^{a/\theta}{\theta\over\mu(1-s)+j\theta}&\to\int_\epsilon^a\frac{dx}{\mu(1-s)+x}=\ln{\mu-\lambda+a\over\mu-\lambda+\epsilon}.
\end{align*}
This finishes the proof of  \eqref{sme}.

Next we prove the weak convergence stated  in the subcritical case. Fix some  $0<\alpha<\frac{1}{2}$. Following the approach of \cite{B75}, we establish  \eqref{2ii} after splitting the extinction time $\tau_{\theta}$ in two parts
\[\tau_{\theta}=\tau_{\theta,1}+\tau_{\theta,2},\]
where  $\tau_{\theta,1}$ is the time for
$X_{\theta}(\cdot)$ to reach the level $\theta^{-\alpha}$ and $\tau_{\theta,2}$ is the time for the process $X_{\theta}(\cdot)$ starting from $\theta^{-\alpha}$ to get absorbed at 0.

If $X_\theta(0)=m_\theta$ and $\theta m_\theta\rightarrow a>0$, then according to \cite{Ku} the
scaled process $\theta X_{\theta}(\cdot)$ converges in
probability, uniformly on compact time intervals, to the
deterministic motion $x(\cdot)$
governed by the differential equation
\begin{equation}
x'(t)=(\lambda-\mu)x(t)-x^{2}(t),\ \ x(0)=a.
\label{eq}
\end{equation}
This equation has an explicit solution
\begin{equation}
{1\over x(t)}=\left({1\over a}+{1\over\mu-\lambda}\right)e^{(\mu-\lambda)t}-{1\over\mu-\lambda}.
\label{eq1}
\end{equation}
Solving formally $x(t)=\theta^{1-\alpha}$ for the time $t$ required for the deterministic motion to reach the low level $\theta^{1-\alpha}$ we find
\begin{equation}
\tau_{\theta,1}=\frac{(1-\alpha)\ln\theta^{-1}-\ln (a^{-1}+(\mu-\lambda)^{-1})}{\mu-\lambda}+o(1)
\label{tau1}
\end{equation}
in probability. Combining \eqref{ex1} with \eqref{ka} entails
\[P\left(\tau_{\theta,2}\le {\alpha\ln \theta^{-1}+\ln(1-\lambda/\mu)+x\over\mu-\lambda}\right)\to e^{-(e^{-x})},\ \ \theta\to0,\]
which together with \eqref{tau1} give \eqref{2ii}.

The full justification of  \eqref{tau1} can be achieved using the approach developed in \cite{B74} and \cite{B75}. It is based on an appropriate integral of the equation \eqref{eq}, which in our case is
\begin{equation}
h(z,t)=t-{\ln(\mu-\lambda+z)-\ln x+\ln a-\ln(\mu-\lambda+a)\over \mu-\lambda}.
\label{hxt}
\end{equation}
If $x(t)$ satisfies  \eqref{eq1}, then $h(x(t),t)=0$ and furthermore, $x(t-h(z,t))=z$.
It follows,
\begin{equation}
|z-x(t)|\le (\mu-\lambda+a)x(t)|h(z,t)|.
\label{ine}
\end{equation}

For the rest of the proof we replace $a$ by $\theta m_\theta$ in relations \eqref{eq1} and \eqref{hxt} defining $x(t)$ and $h(x,t)$. Let $\nu_\epsilon$ denote the minimal $t>0$ such that $|\theta X_\theta(t)-x(t)|>\epsilon$, and put $H_\theta(t)=|h(\theta X_\theta(t),t)|$ so that $H_\theta(0)=0$. According to  \cite{B75} a modified Corollary 1 of Lemma 5 in \cite{B74} gives
\[\mathbb{P}_{m_\theta}(H_\theta(t\wedge\nu_\epsilon)>y)\le 2\exp\{-ky+tC_\epsilon(\theta,k,t)\}\]
for all positive $t$ and $k$, where the function $C_\epsilon(\theta,k,t)$ can be chosen such that for some positive constants $C_1,C_2,C_3$
\[C(\theta,k,t)={C_1k\theta \over (x(t)-\epsilon)^{2}}+{C_2k^2\theta \over (x(t)-\epsilon)^{2}}\exp\left\{{C_3k\theta\over x(t)-\epsilon}\right\},\]
if we assume that $x(t)>\epsilon$.
If furthermore, $x(t)-\epsilon>C_4\theta^{1-\alpha}$, then
\[C(\theta,k,t)<C_5k\theta^{2\alpha-1}+C_6k^2\theta^{2\alpha-1}e^{C_7k\theta^{\alpha}}.\]

\section{Proof of Theorem \ref{Tsup} (iii)}

According to \eqref{Et} and \eqref{pij} we have in the critical case
\begin{align*}
\theta \mathbb{E}_{m_\theta}(\tau_{\theta})&\sim\int_0^{a}{\sqrt{\mu+y}\over\mu y}\int_0^{y}{1\over\sqrt{\mu+x}}e^{W(y)-W(x)\over\theta}dxdy\\
&\hspace{10mm}+\int_{a}^\infty{\sqrt{\mu+y}\over\mu y}e^{W(y)/\theta}dy \cdot \int_0^{{a}}{1\over\sqrt{\mu+x}}e^{-W(x)/\theta}dx,
\end{align*}
where $W(x)= x-(\mu+x) \ln  \frac{\mu + x}{\mu}$.
Notice that $W(x)=-{x^2\over2\mu}(1+2r(x))$, where $r(x)\to0$ as $x\to0$. It follows, that for any $T>0$
\begin{align*}
{1\over\sqrt{\mu\theta}}\int_0^{T\sqrt{\mu\theta}}&{\sqrt{\mu+y}\over \mu y}\int_0^{y}{1\over\sqrt{\mu+x}}e^{W(y)-W(x)\over\theta}dxdy\\
&=\int_0^{T}{\sqrt{\mu+z\sqrt{\mu\theta}}\over \mu z}\int_0^{z}{1\over\sqrt{\mu+t\sqrt{\mu\theta}}}e^{t^2-z^2\over2}e^{t^2r(t\sqrt{\mu\theta})}e^{-z^2r(z\sqrt{\mu\theta})}dtdz\\
&\to\mu^{-1}\int_0^{T}z^{-1}\int_0^{z}e^{t^2-z^2\over2}dtdz.
\end{align*}
On the other hand, since for $0\le x\le y$
$$W(y)-W(x)\le(x-y)\ln  \frac{\mu + x}{\mu}-{(y-x)^2\over2(\mu+y)},$$
we have with $C={\sqrt{\mu+a}\over\sqrt{\mu}}$
\begin{align*}
\int_{T\sqrt{\mu\theta}}^{a}&{\sqrt{\mu+y}\over y}\int_0^{y}{1\over\sqrt{\mu+x}}e^{W(y)-W(x)\over\theta}dxdy\\
&\le C\int_{T\sqrt{\mu\theta}}^{a}y^{-1}\left(\int_0^{y/2} e^{-{(y-x)^2\over2(\mu+a)\theta}}dx+\int_{y/2}^{y} e^{{x-y\over\theta}\ln  \frac{\mu + x}{\mu}}dx\right)dy\\
&\le {C\over2}\int_{T\sqrt{\mu\theta}}^{\infty} e^{-{y^2\over8(\mu+a)\theta}}dy+C\int_{T\sqrt{\mu\theta}}^{a}y^{-1}\int_{y/2}^{y} e^{{x-y\over\theta}\ln  \frac{\mu + y/2}{\mu}}dxdy\\
&\le {C\sqrt{\theta}\over2}\int_{T\sqrt{\mu}}^{\infty} e^{-{z^2\over8(\mu+a)}}dz+C\theta\int_{T\sqrt{\mu\theta}}^{a} {dy\over y\ln (1 +{ y\over2\mu})},
\end{align*}
where the last integral is estimated from above by a constant plus
\begin{align*}
\int_{T\sqrt{\mu\theta}}^{2\mu} {dy\over y\ln (1 +{ y\over2\mu})}&\le\int_{T\sqrt{\theta/(4\mu)}}^{1} {dz\over z(z-{z^2\over2})}\le{2\sqrt{\mu}\over T\sqrt{\theta}}+{1\over2}\ln\left({4\mu\over T\sqrt{\mu\theta}}\right).
\end{align*}
Using a table integral
\begin{align*}
\int_0^{\infty}z^{-1}\int_0^{z}e^{t^2-z^2\over2}dtdz=\left({\pi\over2}\right)^{3/2}
\end{align*}
we conclude that
\begin{align*}
\limsup_{\theta\to0}\left|{1\over\sqrt{\theta}}\int_0^{a}{\sqrt{\mu+y}\over\mu y}\int_0^{y}{1\over\sqrt{\mu+x}}e^{W(y)-W(x)\over\theta}dxdy-{1\over\sqrt{\mu}}\left({\pi\over2}\right)^{3/2}\right|\le \epsilon_T,
\end{align*}
where $\epsilon_T\to0$ as $T\to\infty$.

It remains to observe that
\begin{align*}
\int_{a}^\infty{\sqrt{\mu+y}\over\mu y}e^{W(y)/\theta}dy \cdot \int_0^{{a}}{1\over\sqrt{\mu+x}}e^{-W(x)/\theta}dx=o(\sqrt{\theta}).
\end{align*}

\noindent{\sc Remark}. Our approximations for the mean extinction time are specific to the population model we study. These should be compared with similar calculations performed in a more general setting by \cite{Sar}, where, however, strict justifications of some important steps are missing.

\vspace{5mm}

\noindent\textbf{Acknowledgments}. SS was supported by the Swedish Research Council grant 621-2010-5623. AS was supported by the Scientific Committee of  Kazakhstan's Ministry of Education and Science, grant 0732/GF 2012-14.


\begin{thebibliography}{99}

\bibitem{AD}{Andersson, H., and Djehiche, B. (1998). A threshold limit theorem for the stochastic logistic epidemic. J. Appl. Prob., 35(3) : 662-670.}

\bibitem{B74}{Barbour, A.D. (1974). On a functional central limit theorem for Markov population processes. Adv. Appl. Prob., 6(1): 21-39.}
\bibitem{B75}{Barbour, A.D. (1975). The duration of closed stochastic epidemic. Biometrika, 62(2): 477-482.}
\bibitem{Sar}{Doering, C.R., Sargsyan, K.V., and Sander, L.M. (2005). Extinction times for birth-death processes:exact results,continuum asymptotics, and the failure of the Fokker-Plank approximation. Multiscale Model. Simul.,
3(2): 283-299.}
\bibitem{GS}{ Grimmet, G.R. and  Stirzaker, D.R. (2001). Probability and Random Processes (3rd Edition). Oxford: Clarendon Press.}
\bibitem{KM} {Karlin, S., and McGregor, J. (1957). The classification of birth and death processes. Trans. Amer. Math. Soc., 86(2): 366-400.}
\bibitem{KT} {Karlin, S. and Taylor, M. (1975). A first course in stochastic processes (2nd Edition). New York: Academic Press.}
\bibitem{Ke} {Keilson, J. (1979). Markov chain models-rarity and exponentiality. New York: Springer-Verlag.}
\bibitem{Ku} {Kurtz, J., (1970). Solutions of ordinary differential equations as limits of pure jump Markov processes. J. Appl. Prob., 7: 49-58. }
\bibitem{AL} {Lambert, A., (2005). The branching process with logistic growth. Ann. Appl. Prob., 15: 1506-1535.}
\bibitem{Su}{Sumita,U., (1984). On conditional passage time structure of birth-death processes. J. Appl. Prob., 21(1): 10-21.}

\end{thebibliography}
\end{document}